\newtheorem{thm}{Theorem}
\newcommand{\mR}{{\mathbb R}}
\newcommand{\cH}{{\mathcal H}}
\newcommand{\cD}{{\mathcal D}}
\newcommand{\cS}{{\mathcal S}}
\newcommand{\trace}{\operatorname{tr}}
\definecolor{grey}{rgb}{0.6,0.6,0.6}
\definecolor{lightgray}{rgb}{0.97,.99,0.99}
\begin{document}
\title{Matricial Wasserstein-1 Distance}

\author{Yongxin Chen, Tryphon T. Georgiou, Lipeng Ning,  and Allen Tannenbaum
\thanks{Y.\ Chen is with the Department of Medical Physics, Memorial Sloan Kettering Cancer Center, NY; email: chen2468@umn.edu}
\thanks{T.\ T. Georgiou is with the Department of Mechanical and Aerospace Engineering, University of California, Irvine, CA; email: tryphon@uci.edu}
\thanks{L.\ Ning is with Brigham and Women's Hospital (Harvard Medical School), MA.; email: lning@bwh.harvard.edu}
\thanks{A.\ Tannenbaum is with the Departments of Computer Science and Applied Mathematics \& Statistics, Stony Brook University, NY; email: allen.tannenbaum@stonybrook.edu}}

\maketitle

\begin{abstract}
We propose an extension of the Wasserstein 1-metric ($W_1$) for matrix probability densities, matrix-valued density measures, and an unbalanced interpretation of mass transport. We use duality theory and, in particular, a ``dual of the dual'' formulation of $W_1$. This matrix analogue of the Earth Mover's Distance has several attractive features including ease of computation.
\end{abstract}

\section{Introduction}

Optimal mass transport (OMT) has proven to be a powerful methodology for numerous problems in physics, probability, information theory, fluid mechanics, econometrics, systems and control, computer vision, and signal/image processing \cite{Kantorovich1948,Rachev,French,Villani,TGT,Mueller}.
Developments along purely controls-related issues ensued when it was recognized that mass transport may be naturally reformulated as a stochastic control problem; see \cite{PW,MT,Leonard,DP,CGP,CGP0,CGP1,CGP2,CGPT} and the references therein.

Historically, the problem of OMT \cite{Rachev,Villani} began with the question of minimizing the effort of transporting one distribution to another, typically with a {\em cost proportional to the Euclidean distance} between starting and ending points of the mass being transported. However, the control-theoretic reformulation \cite{French} which was at the root of the aforementioned developments was based on the {\em choice of a quadratic cost}. The quadratic cost allowed the interpretation of the transport effort as an action integral and gave rise to a Riemannian structure on the space of distributions \cite{McC97,Jordan,Otto}. The originality in our present work is two-fold. First, we formulate the  transport problem with an $L_1$ cost in a similar manner, as a control problem with an $L_1$-path cost functional, and secondly,  we develop theory for shaping flows of matrix-valued distributions which is a non-trivial generalization of classical OMT.

The relevance of OMT on flows of matrix-valued distributions was already recognized in \cite{Lipeng,NinGeo14} and was cast as a control problem as well, albeit in a quadratic-cost setting. At that point, interest in the geometry of matrix-valued distributions stemmed from applications to spectral analysis of vector-valued time series (see \cite{Lipeng} and the references therein). Yet soon it became aparent that flows of matrix-valued distributions represent evolution of quantum systems. In fact, there has been a burst of activity in applying ideas of quantum mechanics to OMT of matrix-valued densities as well as, utilizing an OMT framework to study the dynamics of quantum systems: three groups \cite{Carlen, Chen, Mielke} independently and simultaneously developed quantum mechanical frameworks for defining a Wasserstein-2 distance on matrix-valued densities (normalized to have trace 1), via a variational formalism generalizing the  work of \cite{French}. We note that \cite{Carlen, Chen, Mielke} develop matrix-valued generalizations of the Wasserstein 2-metric ($W_2$) and explore the {\em Riemannian-like} structure for studying the entropic flows of quantum states.

Thus, in our present note, we develop a natural extension of the Wasserstein 1-metric to matrix-valued densities and matrix-valued measures.
Our point of view is somewhat different from the earlier works on matricial Wassserstein-2 metrics. We mainly use duality theory \cite{Evans, Villani}. Further, we do not employ the Benamou and Brenier \cite{French} control formulation of OMT, but rather the Kantorovich-Rubinstein duality. This new scheme is computationally more attractive and, moreover, it is especially appealing when specialized to weighted graphs (discrete spaces) that are sparse (few edges), as is the case for many real-world networks \cite{Sandhu,Sandhu1,jonck}.

The present paper is structured as follows. Section \ref{sec:omtscalar} is a quick review of several different formulations of Wasserstein-1 distance in the scalar setting. Using the quantum gradient operator defined in Section \ref{sec:quantumgrad}, we generalize the Wasserstein-1 metric to the space of density matrices in Section \ref{sec:omtmatrix}. The case where the two marginal matrices have different traces is discussed in Section \ref{sec:unbalanced}. We finally extend the framework to deal with matrix-valued densities in Section \ref{sec:space}, which may find applications in multivariate spectra analysis as well as comparing stable multi-inputs multi-outputs (MIMO) systems. The paper concludes with an academic example in Section \ref{sec:example}.

\section{Optimal mass transport}\label{sec:omtscalar}
We begin with duality theory, explained for scalar densities, upon which our matricial generalization of the Wasserstein-1 metric is based.

Given two probability densities $\rho_0$ and $\rho_1$ on $\mR^m$, the Wasserstein-1 distance between them is
	\begin{align}\label{eq:scalarprimal}
		W_1 (\rho_0,\rho_1):=\inf_{\pi\in \Pi(\rho_0,\rho_1)} \int_{\mR^m\times\mR^m} \|x-y\| \pi(dx,dy),
	\end{align}
where $\Pi(\rho_0,\rho_1)$ denotes the set of couplings between $\rho_0$ and $\rho_1$. The Wasserstein-1 distance has a dual formulation via the following result due to Kantorovich and Rubinstein \cite{Evans, Rachev, Villani}:
	\begin{align}\label{eq:scalardual}
		W_1 (\rho_0,\rho_1)&=\sup_f \left\{\int_{\mR^m} f(x)(\rho_0(x)-\rho_1(x))dx\right.\\
		&\hspace*{2cm}\left.\phantom{\int_{\mR^m}}\mbox{subject to }\|f\|_{\rm Lip} \le 1\right\},\nonumber
	\end{align}
	where $\|f\|_{\rm Lip}$ denotes the Lipschitz constant.
When $f$ is differentiable, $\|f\|_{\rm Lip} =\|\nabla_x f\|$. It follows that,
	\begin{align}\label{eq:scalardual1}
		W_1 (\rho_0,\rho_1)&=\sup_f \left\{\int_{\mR^m} f(x)(\rho_0(x)-\rho_1(x))dx\right.\\
		&\hspace*{2cm}\left.\phantom{\int_{\mR^m}}\mbox{subject to }\|\nabla_x f\| \le 1\right\},\nonumber
	\end{align}	
Starting from \eqref{eq:scalardual1}, by once again considering the dual, we readily obtain the very important reformulation
	\begin{align}\label{eq:scalarprimal1}
		W_1 (\rho_0,\rho_1)&=\inf_{u(\cdot)} \left\{\int_{\mR^m} \|u(x)\| dx \right.\\
		&\hspace*{1cm}\left.\phantom{\int_{\mR^m}}\mbox{subject to }\rho_0-\rho_1 +\nabla_x\cdot u=0 \right\},\nonumber
	\end{align}
where the (Lagrange) optimization variable $u$ now represents flux. Alternatively, this can be written as the control-optimization problem in the Benamou-Brenier style \cite{French}
	\begin{align}\label{eq:scalarprimal1control}
		W_1 (\rho_0,\rho_1)&=\inf_{u(\cdot,\cdot)} \left\{\int_0^1\int_{\mR^m} \|u(t,x)\| dx dt\right.\\
		&\phantom{x\int_{\mR^m}}\mbox{subject to } \frac{\partial \rho(t,x)}{\partial t}+\nabla_x\cdot u(t,x)=0,\nonumber\\
		&\left.\phantom{\int_{\mR^m}}\;\mbox{and }\rho(0,x)=\rho_0(x),\;\rho_1(1,x)=\rho_1(x) \right\},\nonumber
	\end{align}

This ``dual of the dual'' formulation turns the Kantorovich and Rubinstein into a control problem to determine a suitable velocity (control vector) $u$. We remark that from a computational standpoint, when applied to discrete spaces (graphs),
this formulation leads to a very substantial computational benefit in the case of sparse graphs; this is due to the fact that \eqref{eq:scalarprimal}
involves solving systems of the order of the square of the number of nodes, while equation~\eqref{eq:scalarprimal1}, solving systems of the order of the number of edges.

\section{Gradient on space of Hermitian matrices}\label{sec:quantumgrad}

We closely follow the treatment in \cite{Chen}. In particular, we will need a notion of gradient on the space of Hermitian matrices and its dual, i.e. the divergence.

Denote by $\cH$ and $\cS$ the set of  $n\times n$ Hermitian and skew-Hermitian matrices, respectively. We will assume that all of our matrices are of fixed size $n\times n$. Next, we denote the space of block-column vectors consisting of $N$ elements in $\cS$ and $\cH$ as $\cS^N$ and $\cH^N$, respectively. We also let $\cH_+$ and $\cH_{++}$ denote the cones of nonnegative and positive-definite matrices, respectively, and
	\begin{eqnarray}\label{eq:D}
    \cD &:=&\{\rho \in \cH_{+} \mid \trace(\rho)=1\},\\
	\cD_+ &:=&\{\rho \in \cH_{++} \mid \trace(\rho)=1\}.
    \end{eqnarray}
We note that the tangent space of $\cD_+,$ at any $\rho\in \cD+$ is given by
	\begin{equation}\label{eq:Trho}
		T_\rho=\{ \delta \in \cH \mid \trace(\delta)=0\},
	\end{equation}
and we use the standard notion of inner product, namely
	\[
		\langle X,Y\rangle=\trace(X^*Y),
	\]
for both $\cH$ and $\cS$.
For $X, Y\in \cH^N$ ($\cS^N$),
	\[
		\langle X, Y\rangle=\sum_{k=1}^N \trace(X_k^*Y_k).
	\]
Given $X=[X_1^*,\cdots,X_N^*]^* \in \cH^N$ ($\cS^N$), $Y\in \cH$ ($\cS$), set
	\[
		XY=\left[\begin{array}{c}
		X_1\\
		\vdots \\
		X_N
		\end{array}\right]Y
		:=
		\left[\begin{array}{c}
		X_1Y\\
		\vdots \\
		X_N Y
		\end{array}\right],
	\]
and
	\[
		YX=Y\left[\begin{array}{c}
		X_1\\
		\vdots \\
		X_N
		\end{array}\right]
		:=
		\left[\begin{array}{c}
		YX_1\\
		\vdots \\
		YX_N
		\end{array}\right].
	\]

For a given $L\in\cH^N$ we define
	\begin{equation}\label{eq:gradient}
		\nabla_L: \cH \rightarrow {\cS}^N, ~~X \mapsto
		\left[ \begin{array}{c}
		L_1 X-XL_1\\
		\vdots \\
		L_N X-X L_N
		\end{array}\right]
	\end{equation}
to be the \textbf{\emph{gradient operator}}.
By analogy with the ordinary multivariable calculus, we refer to its dual with respect to the Hilbert-Schmidt inner product as the (negative) \textbf{\emph{divergence operator}}, and this is
	\begin{equation}\label{eq:divergence}
		\nabla_L^*: {\cS}^N \rightarrow \cH,~~Y=
		\left[ \begin{array}{c}
		Y_1\\
		\vdots \\
		Y_N
		\end{array}\right]
		\mapsto
		\sum_k^N L_k Y_k-Y_k L_k,
	\end{equation}
i.e., $\nabla_L^*$ is defined by means of the identity
	\[
		\langle \nabla_L X , Y\rangle =\langle X , \nabla_L^* Y\rangle.
	\]
A standing assumption throughout, is that the null space of $\nabla_L$, denoted by ${\rm ker}(\nabla_L)$, contains only scalar multiples of the identity matrix.

\section{Wassertein-1 distance for density matrices}\label{sec:omtmatrix}

In this section, we show that both \eqref{eq:scalardual1} and \eqref{eq:scalarprimal1} have natural counterparts for probability density matrices, i.e. matrices in $\cD$.
This set-up obviously works for matrices in $\cH_+$ of equal trace.

We treat \eqref{eq:scalardual1} as our starting definition and define the $W_1$ distance in the space of density matrices as
	\begin{equation}\label{eq:W1}
		W_1 (\rho_0,\rho_1):=\sup_{f\in \cH} \left\{\trace[f(\rho_0-\rho_1)]~\mid~\|\nabla_L f\| \le 1\right\}.
	\end{equation}
Here $\|\cdot\|$ is the operator norm. The above is well-defined since by assumption, the null space of $\nabla_L$ is spanned by the identity matrix $I$. As above, we have that
	\[
		\nabla_L f=
		\left[\begin{array}{c}
		L_1f-fL_1\\
		\vdots \\
		L_N f-fL_N
		\end{array}\right].
	\]
This should be compared to the Connes spectral distance \cite{connes}, which is given by
	\[
		d_D(\rho_0,\rho_1)=\sup_{f\in \cH}\left\{\trace[f(\rho_0-\rho_1)]~\mid~\|[D,f]\| \le 1\right\}.
	\]
	
It is not difficult to see that the dual of \eqref{eq:W1} is
	\begin{equation}\label{eq:W1dual}
		\hat W_1 (\rho_0,\rho_1)=\inf_{u\in\cS^N}\left\{\|u\|_*~\mid~\rho_0-\rho_1-\nabla_L^* u=0\right\},
	\end{equation}
which is the counterpart of \eqref{eq:scalarprimal1}. Here $\|\cdot\|_*$ denotes the nuclear norm \cite{nuclear}.
In particular, we have the following theorems.
\begin{thm} Notation as above. Then
	\[
	W_1 (\rho_0,\rho_1)=\hat W_1 (\rho_0,\rho_1).
	\]
\end{thm}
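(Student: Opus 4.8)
The plan is to recognize the two quantities as a Fenchel/Lagrangian dual pair and to invoke strong convex duality, the essential analytic input being that the operator norm $\|\cdot\|$ and the nuclear norm $\|\cdot\|_*$ are dual to one another with respect to the Hilbert--Schmidt inner product on $\cS^N$, i.e. $\|Y\|=\sup\{\langle Y,u\rangle : \|u\|_*\le 1\}$ and, symmetrically, $\|u\|_*=\sup\{\langle Y,u\rangle:\|Y\|\le 1\}$. Everything then reduces to a clean minimax manipulation built on the adjoint relation $\langle \nabla_L f,u\rangle=\langle f,\nabla_L^* u\rangle$ already recorded in \eqref{eq:divergence}.

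First I would rewrite the hard constraint $\|\nabla_L f\|\le 1$ in \eqref{eq:W1} via the indicator function of the operator-norm unit ball and then expand that indicator through Fenchel biconjugation. Since the nuclear norm is precisely the support function of the operator-norm unit ball, one has $\iota_{\{\|\cdot\|\le 1\}}(\nabla_L f)=\sup_{u\in\cS^N}\{\langle \nabla_L f,u\rangle-\|u\|_*\}$. Substituting into \eqref{eq:W1} and applying the adjoint identity turns $W_1$ into the saddle-value
\[
W_1(\rho_0,\rho_1)=\sup_{f\in\cH}\ \inf_{u\in\cS^N}\ \Big\{\langle f,\rho_0-\rho_1-\nabla_L^* u\rangle+\|u\|_*\Big\}.
\]

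The decisive step is to interchange the order of $\sup$ and $\inf$. The bracketed functional is affine (hence concave) in $f$ and convex in $u$, so I would justify the swap by Sion's minimax theorem, after reducing $f$ to an effectively compact domain: both the objective $\trace[f(\rho_0-\rho_1)]$ and the constraint in \eqref{eq:W1} are invariant under $f\mapsto f+cI$, and $\ker(\nabla_L)=\mathbb{C}I$ by the standing assumption, so it suffices to optimize over $f$ in the trace-zero subspace $T_\rho$ of \eqref{eq:Trho} intersected with the bounded set $\{\|\nabla_L f\|\le 1\}$. Once the interchange is legitimate, the inner supremum over $f\in\cH$ of the linear term $\langle f,\rho_0-\rho_1-\nabla_L^* u\rangle$ equals $0$ when $\rho_0-\rho_1-\nabla_L^* u=0$ and $+\infty$ otherwise, which collapses the expression to exactly $\hat W_1(\rho_0,\rho_1)=\inf_u\{\|u\|_*:\rho_0-\rho_1-\nabla_L^* u=0\}$, as in \eqref{eq:W1dual}.

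The main obstacle is ensuring that there is no duality gap, i.e. that the minimax exchange is valid and both extrema are finite and attained. I expect to handle this through feasibility of the dual constraint together with standard strong duality for such convex programs: the range of $\nabla_L^*$ is the orthogonal complement of $\ker(\nabla_L)=\mathbb{C}I$, namely the trace-zero Hermitian matrices, and since $\trace(\rho_0-\rho_1)=1-1=0$, the equation $\rho_0-\rho_1=\nabla_L^* u$ is solvable, so $\hat W_1$ is feasible and finite while the primal is trivially feasible ($f=0$). Verifying the precise compactness/Slater hypotheses that license Sion's theorem (equivalently, closedness of the associated perturbation function) is the one place where genuine care is required and constitutes the technical heart of the argument.
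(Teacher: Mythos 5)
Your proposal is, at bottom, the same Lagrangian-duality argument as the paper's, run in the opposite direction. The paper starts from $\hat W_1$ in \eqref{eq:W1dual}, expands $\|u\|_*=\sup_{\|g\|\le 1}\langle u,g\rangle$, uses the adjoint identity, and obtains $\hat W_1\ge W_1$ from the weak-duality inequality $\inf\sup\ge\sup\inf$, closing the gap with a one-line appeal to strict feasibility of both problems. You start from \eqref{eq:W1}, biconjugate the indicator of the operator-norm unit ball (the same operator/nuclear norm pairing), and arrive at the identical saddle function; note that in your orientation the inequality $W_1=\sup_f\inf_u\le\inf_u\sup_f=\hat W_1$ is the free one, and the entire burden sits on the reverse exchange. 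So the mathematical content coincides with the paper's proof, and your identification of dual feasibility --- $\range(\nabla_L^*)=\ker(\nabla_L)^\perp$ equals the trace-zero Hermitian matrices, and $\trace(\rho_0-\rho_1)=0$ --- is a useful point the paper leaves implicit. (Minor quibble: inside $\cH$ the kernel is $\mathbb{R}I$, not $\mathbb{C}I$.)

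There is, however, one concrete wobble in your justification of the exchange: the compactness reduction you offer for Sion's theorem does not apply as stated. After biconjugation the constraint $\|\nabla_L f\|\le 1$ has been dualized away, so in the saddle expression $f$ ranges over all of $\cH$ (or its trace-zero subspace after modding out the kernel); the set $\{f:\|\nabla_L f\|\le 1\}$ --- compact on the trace-zero subspace though it is --- is simply not the $f$-domain of your minimax, and the $u$-domain is unbounded as well, so Sion's hypotheses fail as invoked. The clean repair is the route you mention only in passing, and the one implicitly behind the paper's one-liner: $\hat W_1$ is a finite-dimensional convex program whose sole constraint is the affine equation $\nabla_L^*u=\rho_0-\rho_1$; given feasibility (which you established), strong duality for convex programs with affine constraints yields zero gap with no Slater interiority or compactness needed. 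Alternatively one can truncate $f$ to a ball of radius $M$, apply Sion there, and let $M\to\infty$, but substituting the affine-constraint strong-duality theorem for the Sion step is simpler and makes your argument complete --- and brings it exactly in line with the paper's.
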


\begin{proof}
We start from \eqref{eq:W1dual} and use the fact that
	\[
		\|u\|_*=\sup_{g\in\cS^N, \|g\|\le 1} \langle u, g\rangle.
	\]
It follows
	\begin{eqnarray*}
	\hat W_1 (\rho_0,\rho_1)&=&\inf_{u}\sup_{f}\left\{\|u\|_*
	+\langle f, \rho_0-\rho_1-\nabla_L^* u\rangle\right\}
	\\
	&=&\inf_{u}\sup_{f,\|g\|\le 1}\left\{\langle u, g\rangle
	+\langle f, \rho_0-\rho_1-\nabla_L^* u\rangle\right\}
	\\
	&=&\inf_{u}\sup_{f,\|g\|\le 1}\left\{\langle u, g-\nabla_Lf\rangle
	+\langle f, \rho_0-\rho_1\rangle\right\}
	\\
	&\ge& \sup_{f,\|g\|\le 1}\inf_{u}\left\{\langle u, g-\nabla_Lf\rangle
	+\langle f, \rho_0-\rho_1\rangle\right\}
	\\
	&=& \sup_{f,\|g\|\le 1}\left\{\langle f, \rho_0-\rho_1\rangle~\mid~
	g=\nabla_Lf\right\}
	\\
	&=& \sup_{f}\left\{\langle f, \rho_0-\rho_1\rangle~\mid~
	\|\nabla_L f\|\le 1\right\}\\
    &=& W_1(\rho_0,\rho_1).
	\end{eqnarray*}
This implies that \eqref{eq:W1} and \eqref{eq:W1dual} are dual to each other. Since both of them are strictly feasible, the duality gap is zero. Therefore $W_1 (\rho_0,\rho_1)=\hat W_1 (\rho_0,\rho_1)$.
\end{proof}

\begin{thm}\label{thm:W1metric}
The $W_1$ distance defined as in \eqref{eq:W1} is a metric on the space of density matrices $\cD$.
\end{thm}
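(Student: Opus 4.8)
The plan is to verify directly the four defining properties of a metric for the functional $W_1$ in \eqref{eq:W1}: nonnegativity, symmetry, the triangle inequality, and the identity of indiscernibles. The first three are essentially immediate from the variational form, precisely because the feasible set $\{f\in\cH \mid \|\nabla_L f\|\le 1\}$ does not depend on $\rho_0,\rho_1$. For nonnegativity I would note that $f=0$ is feasible and yields objective value $0$, so the supremum is at least $0$. For symmetry, the feasible set is invariant under $f\mapsto -f$ (since $\nabla_L$ is linear, $\|\nabla_L(-f)\|=\|\nabla_L f\|$), while $\trace[(-f)(\rho_1-\rho_0)]=\trace[f(\rho_0-\rho_1)]$; hence interchanging the marginals leaves the value unchanged.

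For the triangle inequality I would exploit the fact that a single feasible $f$ may be used simultaneously for every pair. For any $f$ with $\|\nabla_L f\|\le 1$,
\[
	\trace[f(\rho_0-\rho_2)] = \trace[f(\rho_0-\rho_1)] + \trace[f(\rho_1-\rho_2)],
\]
and the right-hand side is bounded by $W_1(\rho_0,\rho_1)+W_1(\rho_1,\rho_2)$ because $f$ is feasible for each summand. Taking the supremum over feasible $f$ on the left gives $W_1(\rho_0,\rho_2)\le W_1(\rho_0,\rho_1)+W_1(\rho_1,\rho_2)$.

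Before the last axiom it is worth recording that the supremum is finite, so that $W_1$ is genuinely real-valued. Since $\rho_0-\rho_1$ is traceless, the objective $\trace[f(\rho_0-\rho_1)]$ is unchanged under $f\mapsto f+cI$, so one may restrict to $f$ orthogonal to $\ker(\nabla_L)=\{cI\}$; on this complement $\nabla_L$ is injective, hence $\|\nabla_L f\|$ is a norm equivalent to $\|f\|$, the feasible set is compact, and the supremum is attained and finite.

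The main obstacle is the nontrivial direction of the identity of indiscernibles, namely that $W_1(\rho_0,\rho_1)=0$ forces $\rho_0=\rho_1$ (the converse being clear, as $\rho_0=\rho_1$ makes every objective value zero). I would argue by contraposition. Suppose $\Delta:=\rho_0-\rho_1\neq 0$; it is Hermitian with $\trace(\Delta)=0$. The standing assumption that $\ker(\nabla_L)$ contains only scalar multiples of $I$ is exactly what is needed: a nonzero traceless matrix cannot equal $cI$, so $\Delta\notin\ker(\nabla_L)$ and therefore $\nabla_L\Delta\neq 0$. Choosing the feasible $f=\Delta/\|\nabla_L\Delta\|$ then gives
\[
	\trace[f(\rho_0-\rho_1)] = \frac{\trace(\Delta^2)}{\|\nabla_L\Delta\|} = \frac{\langle\Delta,\Delta\rangle}{\|\nabla_L\Delta\|} > 0,
\]
so $W_1(\rho_0,\rho_1)>0$, completing all four axioms. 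The only place where the structure of $\nabla_L$ enters essentially is this final step, where the kernel hypothesis is what guarantees strict positivity; the other three properties would hold for any linear operator in place of $\nabla_L$.
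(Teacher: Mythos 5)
Your proof is correct, but it takes a genuinely different route from the paper's on the one nontrivial axiom. The paper proves the triangle inequality on the \emph{dual} side \eqref{eq:W1dual}: it takes (near-)optimal fluxes $u_1$ for $(\rho_0,\rho_1)$ and $u_2$ for $(\rho_1,\rho_2)$, observes that $u_1+u_2$ is feasible for $(\rho_0,\rho_2)$ since $\rho_0-\rho_2-\nabla_L^*(u_1+u_2)=0$, and invokes subadditivity of the nuclear norm, $\|u_1+u_2\|_*\le\|u_1\|_*+\|u_2\|_*$. You instead work entirely on the \emph{primal} side \eqref{eq:W1}, splitting $\trace[f(\rho_0-\rho_2)]=\trace[f(\rho_0-\rho_1)]+\trace[f(\rho_1-\rho_2)]$ for a single feasible $f$ and taking suprema. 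Your argument is more elementary and self-contained: it does not rely on the duality theorem $W_1=\hat W_1$, nor on existence of optimal fluxes (which the paper's proof tacitly assumes and would otherwise need an $\epsilon$-approximation to patch). The paper's flux-based argument, on the other hand, is the natural transport-theoretic one (adding fluxes corresponds to concatenating transports) and is the version the authors reuse verbatim for the unbalanced metric $V_1$. You also supply two points the paper waves off as ``obvious'': the strict positivity $W_1(\rho_0,\rho_1)>0$ for $\rho_0\neq\rho_1$, where your choice $f=\Delta/\|\nabla_L\Delta\|$ with $\Delta=\rho_0-\rho_1$ makes explicit that the standing kernel assumption $\ker(\nabla_L)=\{cI\}$ is exactly what is needed (a nonzero traceless Hermitian $\Delta$ cannot be a scalar multiple of $I$, so $\nabla_L\Delta\neq0$), and the finiteness of the supremum via compactness after quotienting out $\ker(\nabla_L)$ --- both worthwhile additions, and both correct in this finite-dimensional setting.
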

\begin{proof}
Obviously $W_1(\rho_0,\rho_1)\ge 0$ holds with equality if and only if $\rho_0=\rho_1$. The symmetric property that $W_1(\rho_0,\rho_1)=W_1(\rho_1,\rho_0)$ is also clear from the definition. Here we prove the triangle inequality. That is, for any $\rho_0, \rho_1, \rho_2\in\cD)$, we have
	\[
		W_1(\rho_0,\rho_2)\le W_1(\rho_0,\rho_1)+W_1(\rho_1,\rho_2).
	\]
It is easier to see this from the dual formulation \eqref{eq:W1dual}. Let $u_1$, $u_2$ be the optimal fluxes for $(\rho_0,\rho_1)$ and $(\rho_1,\rho_2)$ respectively. Then $u_1+u_2$ is a feasible flux for $(\rho_0,\rho_2)$, namely,
	\[
		\rho_0-\rho_2-\nabla_L^* (u_1+u_2)=0.
	\]
It follows that
	\[
		W_1(\rho_0,\rho_2)\le \|u_1+u_2\|_*\le\|u_1\|_*+\|u_2\|_*,
	\]
which completes the proof.
\end{proof}

\section{Wassertein-1 distance: the unbalanced case}\label{sec:unbalanced}
In this section, we extend the definition of Wasserstein-1 distance to the space nonnegative matrices $\cH_+,$ i.e., we remove the constraint of both matrices having equal traces. Compare also with some very interesting recent  work \cite{osher} on fast computational methods for $W_1$ in the unbalanced scalar case.

In order to compare matrices of unequal trace we relax the constraint in \eqref{eq:W1dual}, which forces
	$\trace(\rho_0)=\trace(\rho_1)$,
by introducing a ``source'' term $v\in \cH$. That is, we replace our continuity equation \eqref{eq:W1dual} with
	\begin{equation}
		\rho_0-\rho_1-\nabla_L^* u-v=0.
	\end{equation}
With this added source, we define a Wasserstein-1 distance in $\cH_+$ as follows.   Given $\rho_0, \rho_1\in \cH_+$, we define
	\begin{align}\label{eq:W1dual1}
		&V_1 (\rho_0,\rho_1)\!=\!\inf_{\substack{u\in\cS^N\\v\in\cH}}\left\{\|u\|_*+\alpha \|v\|_*\!\mid\!\rho_0\!-\!\rho_1\!-\!\nabla_L^* u-v=0\right\}.
	\end{align}
Here $\alpha>0$ measures the relative significance between $u$ and $v$.

Another natural way to compare $\rho_0,\rho_1\in\cH_+$ is by finding $\mu, \nu \in \cH_+$ having equal trace that are close to $\rho_0, \rho_1$ in some norm (here taken to be the nuclear norm), as well as close to one another. More specifically, we seek $\mu, \nu$ to minimize
	\begin{equation}
		W_1(\mu,\nu)+\alpha\|\rho_0-\mu\|_*+\alpha\|\rho_1-\nu\|_*.
	\end{equation}
Putting the two terms together we obtain the following definition of Wasserstein-1 distance
	\begin{subequations}\label{eq:V1dual}
	\begin{eqnarray}
	\!\!\hat V_1(\rho_0,\rho_1)\!\!\!\!&=&\!\!\!\!\!\!\inf_{\substack{u\in\cS^N \\ \mu, \nu \in \cH_+}}\!\!\|u\|_*\!+\!\alpha\|\rho_0\!-\!\mu\|_*\!+\!
	\alpha\|\rho_1-\nu\|_*
	\\&& \mu-\nu-\nabla_L^* u=0,
	\\&&\trace(\mu)=\trace(\nu).
	\end{eqnarray}
	\end{subequations}
It turns out these two relaxations of $W_1$ are in fact equivalent.

\begin{thm} With notation and assumptions as above,
	\begin{equation}
		V_1(\rho_0,\rho_1)= \hat V_1(\rho_0,\rho_1).
	\end{equation}
\end{thm}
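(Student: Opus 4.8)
The plan is to follow the template of the first theorem and pass to Lagrangian duals: I will show that $V_1$ and $\hat V_1$ are the Lagrangian duals of one and the same concave maximization problem, so that strong duality (both programs are strictly feasible, as there) forces them to agree. Before that, note that one inequality is free: given any feasible $(u,\mu,\nu)$ for $\hat V_1$, the pair $(u,v)$ with $v:=(\rho_0-\mu)-(\rho_1-\nu)=\rho_0-\rho_1-\nabla_L^* u$ is feasible for $V_1$ and, by the triangle inequality for $\|\cdot\|_*$, has cost $\|u\|_*+\alpha\|v\|_*\le\|u\|_*+\alpha\|\rho_0-\mu\|_*+\alpha\|\rho_1-\nu\|_*$; hence $V_1\le\hat V_1$. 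The content is the reverse inequality, which the dual computation will deliver.

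First I would dualize $V_1$. Introducing a multiplier $f\in\cH$ for $\rho_0-\rho_1-\nabla_L^* u-v=0$ and using $\langle f,\nabla_L^* u\rangle=\langle\nabla_L f,u\rangle$, the infimum over $u$ of $\|u\|_*-\langle\nabla_L f,u\rangle$ equals $0$ when $\|\nabla_L f\|\le 1$ and $-\infty$ otherwise (the operator norm is dual to the nuclear norm), while the infimum over the free variable $v$ of $\alpha\|v\|_*-\langle f,v\rangle$ equals $0$ when $\|f\|\le\alpha$ and $-\infty$ otherwise. This yields the dual
\[
\sup_{f\in\cH}\bigl\{\langle f,\rho_0-\rho_1\rangle \;\mid\; \|\nabla_L f\|\le 1,\ \|f\|\le\alpha\bigr\}.
\]

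Next I would dualize $\hat V_1$ with a multiplier $f$ for $\mu-\nu-\nabla_L^* u=0$; the trace constraint is redundant, since $\trace(\nabla_L^* u)=0$ already forces $\trace(\mu)=\trace(\nu)$. The $u$-term again contributes the condition $\|\nabla_L f\|\le 1$. The crucial step is the two cone-constrained inner problems $\inf_{\mu\succeq 0}\{\alpha\|\rho_0-\mu\|_*+\langle f,\mu\rangle\}$ and $\inf_{\nu\succeq 0}\{\alpha\|\rho_1-\nu\|_*-\langle f,\nu\rangle\}$. Setting $a=\rho_0-\mu$ and using $\alpha\|a\|_*-\langle f,a\rangle\ge(\alpha-\|f\|)\|a\|_*\ge 0$ for $\|f\|\le\alpha$ shows the first equals $\langle f,\rho_0\rangle$, attained at $a=0$, i.e. $\mu=\rho_0$; symmetrically the second equals $-\langle f,\rho_1\rangle$, attained at $\nu=\rho_1$. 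Since $\rho_0,\rho_1\in\cH_+$, these minimizers lie in the cone, so the positivity constraints are \emph{inactive} and the dual of $\hat V_1$ collapses to the same problem $\sup_f\{\langle f,\rho_0-\rho_1\rangle\mid\|\nabla_L f\|\le 1,\ \|f\|\le\alpha\}$ (for $\|f\|>\alpha$ one of the two infima is $-\infty$, so such $f$ drop out of the supremum).

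Finally, both primal problems are strictly feasible — for $\hat V_1$ take $\nu=cI$ and $\mu=cI+\nabla_L^* u\succ 0$ with $c$ large — so the duality gaps vanish and each primal value equals the common dual value, giving $V_1=\hat V_1$. I expect the main obstacle to be exactly the positivity constraints $\mu,\nu\in\cH_+$: a direct primal identification is blocked because attaining the reverse-triangle equality $\|\rho_0-\mu\|_*+\|\rho_1-\nu\|_*=\|v\|_*$ forces $\rho_0-\mu$ and $\rho_1-\nu$ to align with the spectral decomposition of $v$, which may conflict with $\mu,\nu\succeq 0$. The merit of the dual route is precisely that it exhibits these constraints as inactive, so no such conflict arises.
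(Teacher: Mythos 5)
Your proof is correct, but it takes a genuinely different route from the paper's. The paper argues entirely on the primal side: the direction $V_1\le\hat V_1$ is the same observation you make, and for the reverse it takes a minimizer $(u,v)$ of $V_1$, splits $v=v_1-v_0$ into its positive and negative (Jordan) parts $v_0,v_1\in\cH_+$, and sets $\mu=\rho_0+v_0$, $\nu=\rho_1+v_1$; this triple is feasible for $\hat V_1$ (the trace constraint holds because $\trace(\nabla_L^* u)=0$), and since $v_0,v_1$ have orthogonal supports, $\|\rho_0-\mu\|_*+\|\rho_1-\nu\|_*=\|v_0\|_*+\|v_1\|_*=\|v\|_*$ exactly, giving $\hat V_1\le V_1$ in a few lines with no duality at all. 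This also dissolves the obstacle you flagged at the end: the right primal identification \emph{adds} the parts of $v$ to $\rho_0,\rho_1$ rather than subtracting, so $\mu,\nu\succeq 0$ is automatic and the reverse-triangle equality is exact by orthogonality of the Jordan parts — the direct route is not blocked, and it is precisely what the paper does. Your dual route is nevertheless sound: the computation showing both programs share the common dual $\sup_{f}\{\langle f,\rho_0-\rho_1\rangle \mid \|\nabla_L f\|\le 1,\ \|f\|\le\alpha\}$ is correct, including the two subtle points — that the cone constraints are inactive because the inner infima are attained at $\mu=\rho_0$, $\nu=\rho_1$, and that for $\|f\|>\alpha$ one inner infimum is $-\infty$ even in the presence of $\mu,\nu\succeq 0$ (perturbations of the form $\mu=\rho_0+t\,xx^*$ stay in the cone) — and your strictly feasible point $\nu=cI$, $\mu=cI+\nabla_L^* u\succ 0$ legitimately supplies Slater's condition for the cone-constrained program. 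What your approach buys is the dual characterization itself: you in effect prove the paper's subsequent theorem (that $V_1$ equals this supremum) as a lemma and extend it to $\hat V_1$; what it costs is invoking strong duality twice, where the paper needs only the nuclear-norm triangle inequality and one explicit construction.
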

\begin{proof}
Clearly, $\hat V_1(\rho_0,\rho_1) \ge V_1(\rho_0,\rho_1)$. On the other hand, let $u, v$ be a minimizer of \eqref{eq:W1dual1}, and $v=v_1-v_0$ with $v_0,v_1\in \cH_+$, i.e., $v_0,v_1$ are the negative and positive parts of $v$ respectively, then $\mu=\rho_0+v_0, \nu=\rho_1+v_1$ together with $u$ is a feasible solution to \eqref{eq:V1dual}. With this solution,
	\begin{eqnarray*}
		\hat V_1(\rho_0,\rho_1) &\le&\|u\|_*+\alpha\|\rho_0-\mu\|_*+\alpha\|\rho_1-\nu\|_*
		\\&=& \|u\|_*+\alpha\|v_0\|_*+\alpha\|v_1\|_*
		\\&=& \|u\|_*+\alpha\|v\|_*,
	\end{eqnarray*}
which implies that $\hat V_1(\rho_0,\rho_1)\le V_1(\rho_0,\rho_1)$. This completes the proof.
\end{proof}

\begin{thm}
The formula \eqref{eq:W1dual1} defines a metric on $\cH_+$.
\end{thm}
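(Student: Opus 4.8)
The plan is to verify the three metric axioms directly from the definition \eqref{eq:W1dual1}, following the same strategy used for Theorem~\ref{thm:W1metric}. Nonnegativity is immediate, since $\|u\|_*\ge 0$ and $\alpha\|v\|_*\ge 0$ (recall $\alpha>0$). For symmetry I would use a sign flip: if $(u,v)$ is feasible for $(\rho_0,\rho_1)$, meaning $\rho_0-\rho_1-\nabla_L^*u-v=0$, then $(-u,-v)$ is feasible for $(\rho_1,\rho_0)$, and since $\|-u\|_*=\|u\|_*$ this bijection of feasible sets preserves the objective, giving $V_1(\rho_0,\rho_1)=V_1(\rho_1,\rho_0)$.

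For the triangle inequality I would argue exactly as in Theorem~\ref{thm:W1metric}. Let $(u_1,v_1)$ and $(u_2,v_2)$ be optimal pairs for $(\rho_0,\rho_1)$ and $(\rho_1,\rho_2)$ respectively. Summing their constraints gives $\rho_0-\rho_2-\nabla_L^*(u_1+u_2)-(v_1+v_2)=0$, so $(u_1+u_2,v_1+v_2)$ is feasible for $(\rho_0,\rho_2)$. The subadditivity of the nuclear norm then yields
	\[
		V_1(\rho_0,\rho_2)\le \|u_1+u_2\|_*+\alpha\|v_1+v_2\|_*\le V_1(\rho_0,\rho_1)+V_1(\rho_1,\rho_2).
	\]

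The remaining axiom, identity of indiscernibles, is where I expect the only real subtlety. One direction is trivial: $\rho_0=\rho_1$ admits the feasible pair $(0,0)$, so $V_1=0$. For the converse, I would suppose $V_1(\rho_0,\rho_1)=0$ and pick a feasible minimizing sequence $(u_n,v_n)$ with $\|u_n\|_*+\alpha\|v_n\|_*\to 0$; since $\alpha>0$ and $\|\cdot\|_*$ is a genuine norm on the finite-dimensional spaces $\cS^N$ and $\cH$, this forces $u_n\to 0$ and $v_n\to 0$. Feasibility gives $\rho_0-\rho_1=\nabla_L^*u_n+v_n$ for each $n$, and letting $n\to\infty$ the right-hand side tends to $0$ while the left-hand side is fixed, so $\rho_0=\rho_1$.

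The hard part will be precisely this last step, ensuring that $\rho_0\ne\rho_1$ cannot be approached with vanishing cost. The limiting argument above resolves it cleanly, the key facts being that $\nabla_L^*$ is a bounded linear map and that in finite dimensions the nuclear norm is equivalent to any other norm, so convergence in $\|\cdot\|_*$ implies convergence in the topology under which $\nabla_L^*$ is continuous. All other axioms are routine consequences of $\|\cdot\|_*$ being a norm and the constraint being affine in $(u,v)$.
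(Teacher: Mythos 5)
Your proof is correct and takes essentially the same approach as the paper, whose proof of this theorem simply states that it ``follows exactly the same lines as Theorem~\ref{thm:W1metric}'': nonnegativity and symmetry directly from the definition, and the triangle inequality by summing feasible pairs and invoking subadditivity of $\|\cdot\|_*$, exactly as you do with $(u_1+u_2,\,v_1+v_2)$. Your minimizing-sequence argument for the identity of indiscernibles carefully fills in a step the paper treats as obvious, and it is sound since $\nabla_L^*$ is a continuous linear map on finite-dimensional spaces where all norms are equivalent (in fact the infimum in \eqref{eq:W1dual1} is attained, because the feasible set is a nonempty closed affine set and the objective is coercive, which makes the conclusion immediate from the optimal pair being $(0,0)$).
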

\begin{proof}
The proof follows exactly the same lines as in Theorem \ref{thm:W1metric}.
\end{proof}

Using the technique of Lagrangian multipliers one can deduce the dual formulation of \eqref{eq:W1dual} and establish the following:
\begin{thm} Notation as above. Then
	\begin{equation}
		V_1(\rho_0,\rho_1)=\sup_{f\in\cH}\left\{\trace[f(\rho_0-\rho_1)]~\mid~\|\nabla_L f\| \le 1, ~ \|f\| \le \alpha\right\}.
	\end{equation}
\end{thm}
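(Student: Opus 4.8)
The plan is to derive the stated formula as the Lagrangian dual of the primal problem \eqref{eq:W1dual1}, following the same pattern as the proof that $W_1=\hat W_1$, but now carrying two separate norm terms. I would attach a single Hermitian multiplier $f\in\cH$ to the equality constraint $\rho_0-\rho_1-\nabla_L^*u-v=0$ (which lives in $\cH$), forming the Lagrangian
\[
	\mathcal{L}(u,v,f)=\|u\|_*+\alpha\|v\|_*+\langle f,\,\rho_0-\rho_1-\nabla_L^*u-v\rangle,
\]
so that $V_1(\rho_0,\rho_1)=\inf_{u\in\cS^N,\,v\in\cH}\sup_{f\in\cH}\mathcal{L}(u,v,f)$, the inner supremum enforcing the constraint by sending any infeasible pair to $+\infty$.

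Next I would interchange the order to pass to $\sup_f\inf_{u,v}\mathcal{L}$. Weak duality gives $\inf\sup\ge\sup\inf$ automatically, and, exactly as in the first theorem, strict feasibility of \eqref{eq:W1dual1} (a Slater-type condition) closes the gap so that equality holds. In the inner problem the variables $u$ and $v$ decouple; using the adjoint identity $\langle f,\nabla_L^*u\rangle=\langle\nabla_L f,u\rangle$ I would rewrite
\[
	\mathcal{L}=\langle f,\rho_0-\rho_1\rangle+\bigl(\|u\|_*-\langle\nabla_L f,u\rangle\bigr)+\bigl(\alpha\|v\|_*-\langle f,v\rangle\bigr).
\]
Each parenthesized term is (the negative of) a Fenchel conjugate of the nuclear norm, and since the operator norm $\|\cdot\|$ is dual to $\|\cdot\|_*$, we get $\inf_u(\|u\|_*-\langle\nabla_L f,u\rangle)=0$ when $\|\nabla_L f\|\le1$ and $-\infty$ otherwise, while $\inf_v(\alpha\|v\|_*-\langle f,v\rangle)=\alpha\inf_v(\|v\|_*-\langle f/\alpha,v\rangle)$ equals $0$ when $\|f\|\le\alpha$ and $-\infty$ otherwise. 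The factor $\alpha$ is precisely what rescales the dual-ball constraint on $v$ into the bound $\|f\|\le\alpha$.

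Collecting the finite cases leaves $V_1(\rho_0,\rho_1)=\sup_f\{\langle f,\rho_0-\rho_1\rangle\mid\|\nabla_L f\|\le1,\ \|f\|\le\alpha\}$, and since $f$ is Hermitian, $\langle f,\rho_0-\rho_1\rangle=\trace[f(\rho_0-\rho_1)]$, which is the asserted identity. The routine part is the two conjugate computations; the step I expect to carry the real weight is the justification of zero duality gap, namely verifying a constraint qualification for \eqref{eq:W1dual1}, together with the bookkeeping that tracks how the weight $\alpha$ migrates from the objective into the dual constraint rather than scaling the functional $\trace[f(\rho_0-\rho_1)]$.
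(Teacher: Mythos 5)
Your proposal is correct and follows essentially the same route as the paper: both form the Lagrangian with a Hermitian multiplier $f$ on the constraint $\rho_0-\rho_1-\nabla_L^*u-v=0$, swap $\inf$ and $\sup$ (weak duality plus strict feasibility for zero gap), and reduce the inner minimization over $u$ and $v$ to the constraints $\|\nabla_L f\|\le 1$ and $\|f\|\le\alpha$. Your direct Fenchel-conjugate computation of $\inf_u(\|u\|_*-\langle\nabla_L f,u\rangle)$ and $\inf_v(\alpha\|v\|_*-\langle f,v\rangle)$ is just a repackaging of the paper's step of writing $\|u\|_*=\sup_{\|g\|\le1}\langle u,g\rangle$ and $\|v\|_*=\sup_{\|h\|\le1}\langle v,h\rangle$ and then forcing $g=\nabla_L f$, $\alpha h=f$, so the two arguments are the same in substance.
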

\begin{proof}
Straight calculation gives
	\begin{eqnarray*}
	V_1 (\rho_0,\rho_1)&=&\inf_{u,v}\sup_f\{\|u\|_*+\alpha \|v\|_*+
	\\&&\langle f, \rho_0-\rho_1-\nabla_L^* u-v\rangle\}
	\\&=&\inf_{u,v}\sup_{f,\|g\|\le 1, \|h\|\le 1}\{\langle u, g\rangle+\alpha \langle v,h\rangle
	\\&&+
	\langle f, \rho_0-\rho_1-\nabla_L^* u-v\rangle\}
	\\&&\hspace*{-1cm}\ge\sup_{f,\|g\|\le 1, \|h\|\le 1}\inf_{u,v}\{\langle u, g-\nabla_L f\rangle
	+\langle v,\alpha h-f\rangle
	\\&&+
	\langle f, \rho_0-\rho_1\rangle\}
	\\&&\hspace*{-1cm}=\sup_f \left\{\langle f, \rho_0-\rho_1\rangle ~\mid ~
	\|\nabla_L f\| \le 1, ~ \|f\| \le \alpha\right\}.
	\end{eqnarray*}
This together with the strong duality completes the proof.
\end{proof}

\section{Wasserstein-1 distance for matrix-valued densities}\label{sec:space}
With little effort we are able to generalize the definition of Wasserstein-1 distance to the space of matrix-valued densities. Examples of matrix-valued densities include power spectra of multivariate time series, stress tensors, diffusion tensors and so on, and hence our motivation in considering matrix-valued distribution on possibly more than a one dimensional spatial coordinates.

Given two matrix-valued densities $\rho_0, \rho_1$ satisfying
	\begin{equation}\label{eq:balanced}
		\int_{\mR^m} \trace(\rho_0(x))dx=\int_{\mR^m} \trace(\rho_1(x))dx,
	\end{equation}
 we can define their Wasserstein-1 distance as
	\begin{eqnarray*}
		W_1 (\rho_0,\rho_1)&:=&\sup_{f\in \cH} \left\{\int_{\mR^m}\trace[f(x)(\rho_0(x)-\rho_1(x))]dx~\mid~\right.
		\\&&\left.\left\|\left[\begin{matrix}\nabla_x f\\\nabla_L f\end{matrix}\right]\right\| \le 1\right\},
	\end{eqnarray*}
or through its dual
	\begin{eqnarray*}
		W_1 (\rho_0,\rho_1)&=&\inf_{\substack{u_1\in\cH^m \\ u_2\in\cS^N}} \left\{\int_{\mR^m}
		\left\|\left[\begin{matrix}u_1(x)\\u_2(x)\end{matrix}\right]\right\|_*dx~\mid~\right.
		\\&&\left.
		\rho_0-\rho_1+\nabla_x\cdot u_1-\nabla_L^* u_2=0\right\}.
	\end{eqnarray*}

For more general densities where condition \eqref{eq:balanced} may not be valid, we define
	 \begin{eqnarray*}
		V_1 (\rho_0,\rho_1)&:=&\sup_{f\in \cH} \left\{\int_{\mR^m}\trace[f(x)(\rho_0(x)-\rho_1(x))]dx~\mid~
		\right. \\&&\left.
		\left\|\left[\begin{matrix}\nabla_x f\\\nabla_L f\end{matrix}\right]\right\| \le 1, \|f\|\le \alpha\right\},
	\end{eqnarray*}
or, equivalently,
	\begin{eqnarray*}
		V_1 (\rho_0,\rho_1)&=&\inf_{\substack{u_1\in\cH^m \\ u_2\in\cS^N, v\in \cH}} \left\{\int_{\mR^m}
		\left\|\left[\begin{matrix}u_1(x)\\u_2(x)\end{matrix}\right]\right\|_*+\alpha\|v\|_*dx~\mid~
		\right.\\&&\left.
		\rho_0-\rho_1+\nabla_x\cdot u_1-\nabla_L^* u_2-v=0\right\}.
	\end{eqnarray*}

One can introduce positive coefficients $\beta_1>0, \beta_2>0$ to  trade-off the relative importance of $u_1$ and $u_2$ in establishing correspondence between the two distributions as follows:
	 \begin{align*}
		V_1 (\rho_0,\rho_1)&:=\sup_{f\in \cH} \left\{\int_{\mR^m}\trace[f(x)(\rho_0(x)-\rho_1(x))]dx~\mid~
		\right.\\&\left.
		\left\|\left[\begin{matrix}\beta_1\nabla_x f\\\beta_2\nabla_L f\end{matrix}\right]\right\| \le 1, \|f\|\le \alpha\right\},
	\end{align*}
or, equivalently,
	\begin{align*}
		V_1 (\rho_0,\rho_1)&=\inf_{\substack{u_1\in\cH^m \\ u_2\in\cS^N, v\in \cH}} \left\{\int_{\mR^m}
		\left\|\left[\begin{matrix}u_1(x)\\u_2(x)\end{matrix}\right)\right\|_*+\alpha\|v\|_*dx~\mid~
		\right.\\&\left.
		\rho_0-\rho_1+\beta_1\nabla_x\cdot u_1-\beta_2\nabla_L^* u_2-v=0\right\}.
	\end{align*}
	
\section{Example}\label{sec:example}
We use our framework to compare power spectra of multivariate time series (in discrete time). Evidently, the distance between two power spectra induces a distance between corresponding linear modeling filters and, thereby, can be used to compare (stable) MIMO systems \cite{Lipeng}.

Consider the three power spectra as shown in Figure~\ref{fig:spectra} (in different colors). What is shown in the three subplots are power spectra of two time series (in subplots (a) and (c)) and their cross-spectrum (in subplot (b)) as functions of time (the phase of the cross spectra are not shown). Thus, the three different colors represent the three different matrix-valued power spectra given by:
	\begin{align*}
		\rho_0(\theta) &= \left[\begin{matrix}1 & 0.4 \\0 & 1\end{matrix}\right]
		\left[\begin{matrix}0.01 & 0 \\0 & \frac{0.7}{|a_0(e^{j\theta})|^2}\end{matrix}\right]
		\left[\begin{matrix}1 & 0 \\0.4 & 1\end{matrix}\right]
		\\
		\rho_1(\theta) &={\footnotesize \left[\begin{matrix}1 & 0.5 \\0.5e^{j\theta} & 1\end{matrix}\right]}
		\left[\begin{matrix}\frac{0.5}{|a_1(e^{j\theta})|^2} & 0 \\ 0 & \frac{0.5}{|a_1(e^{j\theta})|^2}\end{matrix}\right]
		{\footnotesize \left[\begin{matrix}1 & 0.5e^{-j\theta} \\0.5 & 1\end{matrix}\right]}
		\\
		\rho_2(\theta) &= \left[\begin{matrix}1 & 0 \\0.4e^{j\theta} & 1\end{matrix}\right]
		\left[\begin{matrix}\frac{2}{|a_2(e^{j\theta})|^2}& 0 \\0 & 0.02 \end{matrix}\right]
		\left[\begin{matrix}1 & 0.4e^{-j\theta} \\ 0 & 1\end{matrix}\right]
	\end{align*}
where
	\begin{align*}
	a_0(z)&=(1\!-\!1.9\cos(\frac{\pi}{6})z\!-\!0.95^2z^2)\\&\times(1\!-\!1.5\cos(\frac{\pi}{3})z\!+\!0.75^2z^2)
	\\
	a_1(z)&=(1\!\!-\!\!1.9\cos(\frac{2\pi}{3})z\!\!-\!\!0.95^2z^2)\\
	&\times(1\!\!-\!\!1.5\cos(\frac{5\pi}{8})z\!+\!0.75^2z^2)
	\\
	a_2(z)&= (1\!\!-\!\!1.9\cos(\frac{5\pi}{12})z\!\!-\!\!0.95^2z^2)\\
	&\times(1\!\!-\!\!1.5\cos(\frac{\pi}{2})z\!\!+\!\!0.75^2z^2).
	\end{align*}

\begin{figure}[h]
\centering
\subfloat[$\rho(1,1)$]{\includegraphics[width=0.38\textwidth]{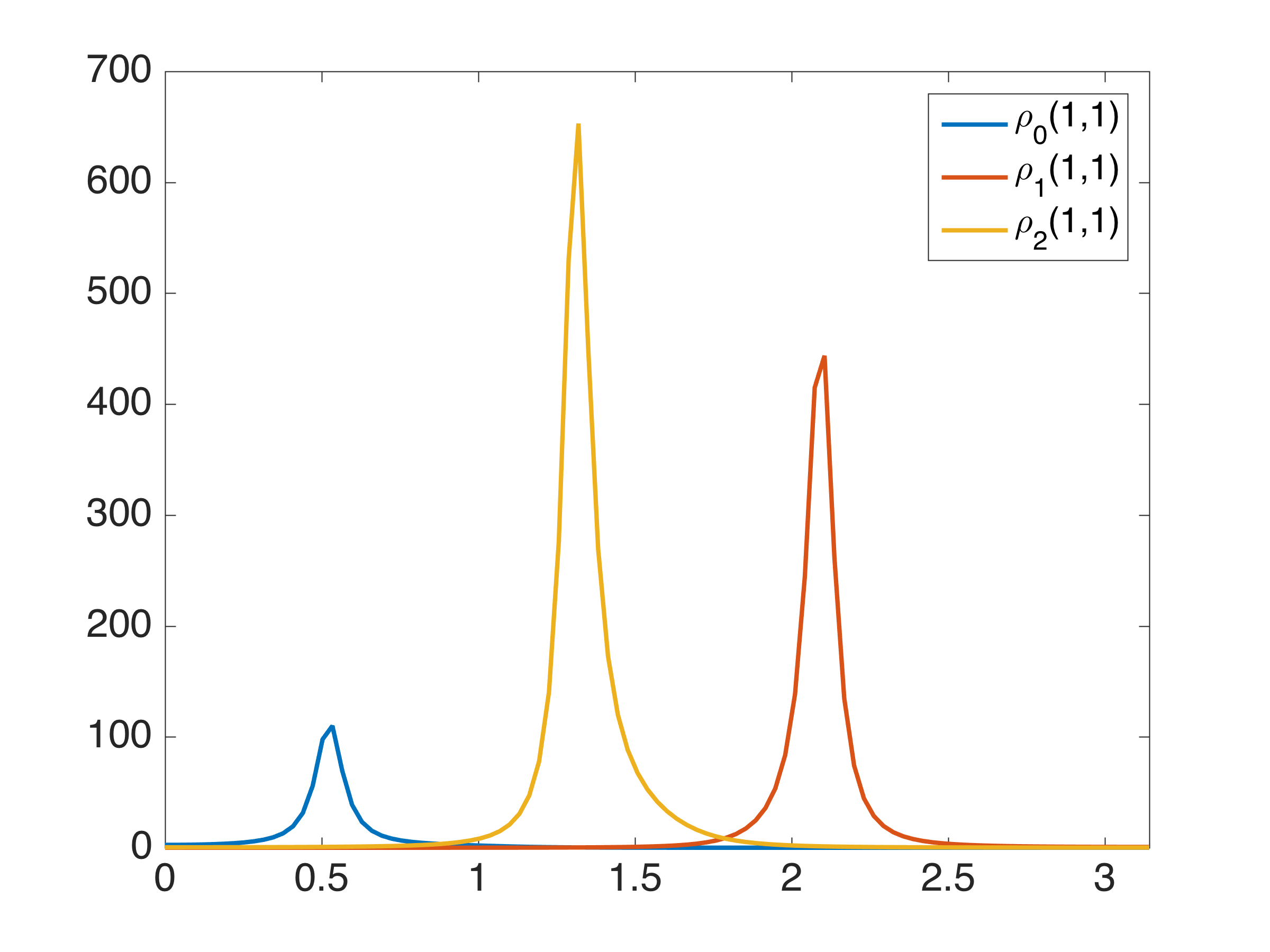}}
\\
\subfloat[$\rho(1,2)$]{\includegraphics[width=0.38\textwidth]{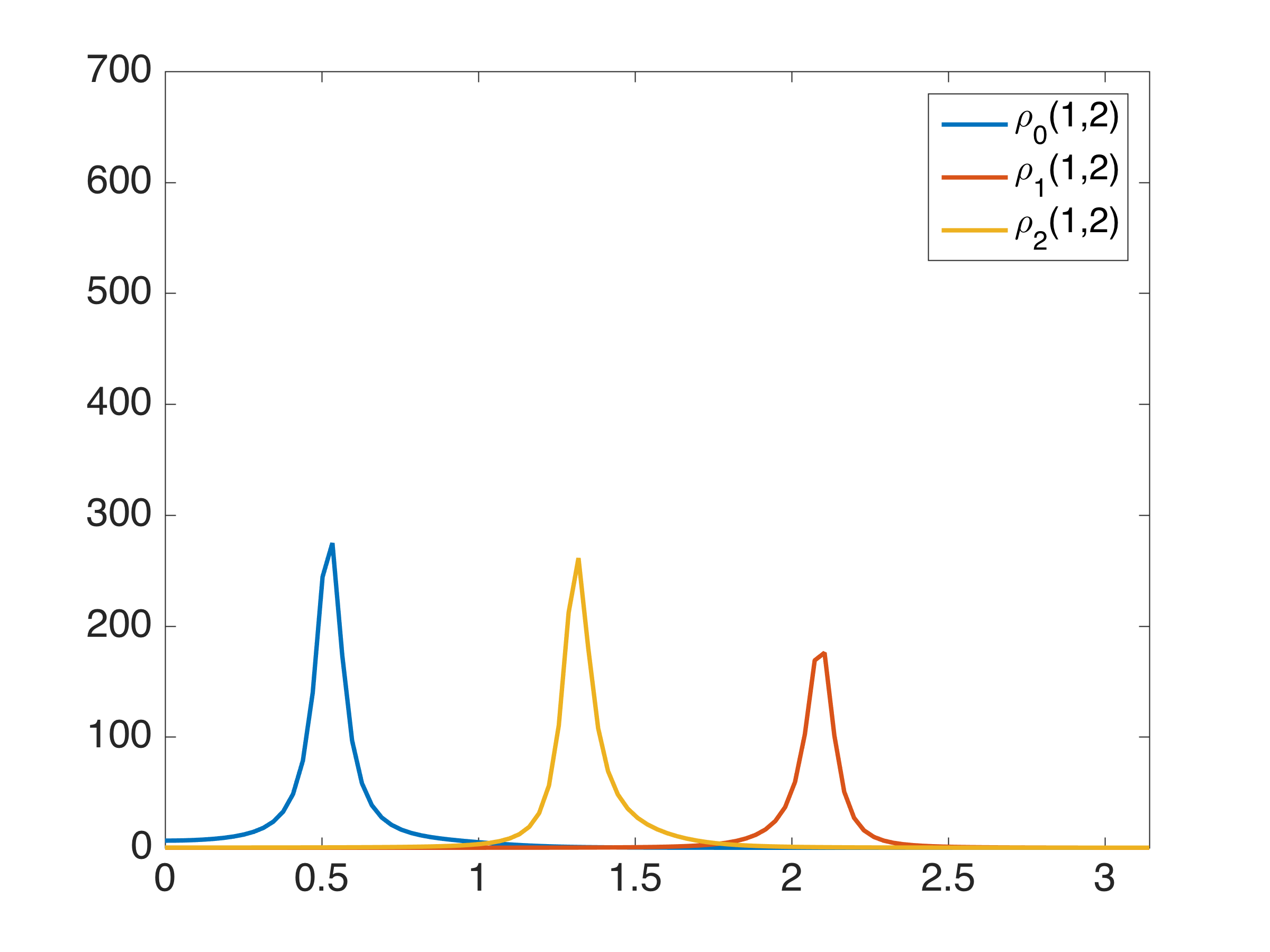}}
\\
\subfloat[$\rho(2,2)$]{\includegraphics[width=0.38\textwidth]{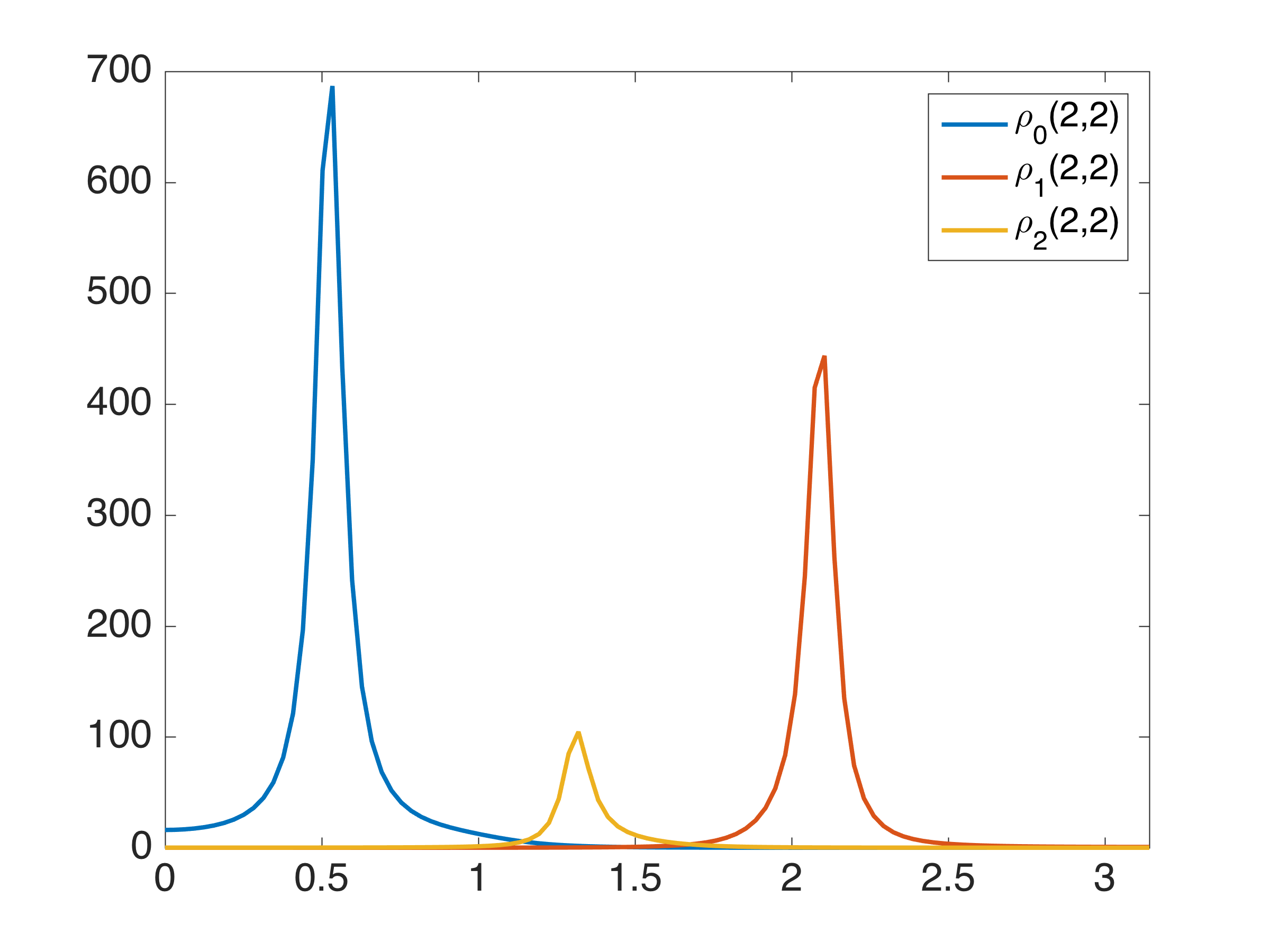}}
    \caption{Power spectra}
    \label{fig:spectra}
\end{figure}

The distances between the each pair for different $\beta_1,\beta_2$ values, $\alpha=1$, and the choice $L=[L_1, \ L_2]$ with
\[L1=\left[\begin{matrix}1& 0\\0 &  0\end{matrix}\right],\;L_2=\left[\begin{matrix}1& 1\\1 &  0\end{matrix}\right],\] are tabulated in Table \ref{table:table1}. We observe that when the penalty on the rotation part is large ($\beta_1>>\beta_2$),  we have $V_1(\rho_0,\rho_2)>V_1(\rho_0,\rho_1)$ and $V_1(\rho_0,\rho_2)>V_1(\rho_2,\rho_1)$. On the other hand, when the penalty on translation is large relative to the cost of rotation ($\beta_1<<\beta_2$), we have $V_1(\rho_0,\rho_1)>V_1(\rho_0,\rho_2)$ and $V_1(\rho_0,\rho_1)>V_1(\rho_1,\rho_2)$. These findings are in agreement with the intuition when observing the relative frequency directionality of power in the three spectra. More specifically, $\rho_1$ requires a significant drift in directionality before we can match it with the other two, while this is less important when comparing $\rho_0$ and $\rho_2$. For this latter case, it is the actual frequency where the power resides that distinguishes the two while the directionality is more in agreement.

What this example underscores is the ability of the metric to be tailored to applications where we need to trade off and compromise, in a principled way, between two vastly different features of matrix-valued distributions, i.e., spatial location versus directionality of the ``intensity.'' What was achieved in this paper is the construction of a suitable and easily computable metric that can be utilized for this purpose.

\begin{table}[t]
\centering
\caption{Distances between power spectra }\label{table:table1}
\begin{tabular}{|c|c|c|c|}
\hline
 & $\rho_0, \rho_1$    & $\rho_1, \rho_2$ & $\rho_0, \rho_2$  \\ \hline
$\beta_1=10, \beta_2=1$       & 77.85    & 77.76 & 137.36    \\ \hline
$\beta_1=1,\beta_2=1$       & 249.40  & 162.03 & 199.78  \\ \hline
$\beta_1=1,\beta_2=10$       & 210.93 & 110.25 & 113.46\\ \hline
\end{tabular}
\end{table}

\section{Future research}

We introduced generalization of the scalar $W_1$ distance to matrices and matrix-valued measures. This new metric, $W_1$, is computationally simpler and more attractive than earlier metrics, based on quadratic cost criteria. In fact, our ``dual of the dual'' formulation makes the metric especially attractive when comparing matrix-valued data on a discrete space (graph, network).

We note that the Wasserstein 1-metric has been used as a tool in defining {\bf\it curvature} \cite{Ollivier} and in analyzing the \textbf{\emph{robustness}} of complex networks derived from scalar-valued data \cite{Sandhu,Sandhu1}. The formalism presented in the current work, suggests alternative notions of curvature and robustness when the nodes of a network carry matrix-valued data, e.g., in diffusion tensor imaging. We plan to pursue such issues in future work.

\section*{Acknowledgements}
This project was supported by AFOSR grants (FA9550-15-1-0045 and FA9550-17-1-0435), grants from the National Center for Research Resources (P41-
RR-013218) and the National Institute of Biomedical Imaging and Bioengineering (P41-EB-015902), National Science Foundation (NSF), and National Institutes of Health (P30-CA-008748 and 1U24CA18092401A1).

\bibliographystyle{plain}

\end{document}